\documentclass[conference] {IEEEtran}
\usepackage[utf8]{inputenc}
\usepackage[reqno]{amsmath}
\usepackage{amssymb}
\usepackage{amsthm}
\usepackage{amscd}
\usepackage{comment}
\usepackage{enumerate}
\usepackage{setspace,hyperref}
\usepackage[algo2e]{algorithm2e} 
\newcommand{\norm}[2]{\left\|#1\right\|_{#2}}
\newcommand{\vb}[1]{\mathbf{#1}}
\newcommand{\col}{\text{col-span}}

\usepackage{url}
\usepackage{amsmath,amsthm,amssymb,amsbsy}
\usepackage{mathdots}
\usepackage{paralist}
\usepackage{xcolor}
\usepackage{color}
\usepackage{graphicx}
\usepackage{algorithm,algpseudocode}
\usepackage{comment}

\usepackage{fancyhdr}
\usepackage{cite}
\usepackage{cleveref}
\usepackage{enumerate}



\newtheorem{theorem}{Theorem}

\theoremstyle{remark}
\newtheorem{remark}{Remark}

\newcommand{\R}{\mathbb{R}}





\newcommand{\vct}[1]{\boldsymbol{#1}}
\newcommand{\mtx}[1]{\boldsymbol{#1}}






%







\newcommand{\eps}{\epsilon}


\newcommand{\calT}{\mathcal{T}}

\newcommand{\calP}{\mathcal{P}}

\newcommand{\ve}{\vct{e}}

\newcommand{\mA}{\mtx{A}}
\newcommand{\mB}{\mtx{B}}
\newcommand{\mC}{\mtx{C}}

\newcommand{\mT}{\mtx{T}}

\setcounter{MaxMatrixCols}{20}

\pagestyle{plain}

\graphicspath{{./figs/}}

\newlength{\imgwidth}
\setlength{\imgwidth}{3.125in}

\newboolean{twoColVersion}
\setboolean{twoColVersion}{false}
\newcommand{\twoCol}[2]{\ifthenelse{\boolean{twoColVersion}} {#1} {#2} }

\numberwithin{equation}{section}
\begin{document}

\title{Tensor Sandwich: Tensor Completion for Low CP-Rank Tensors via Adaptive Random Sampling}

\author{
\IEEEauthorblockN{Cullen Haselby, Santhosh Karnik, Mark Iwen}

\IEEEauthorblockA{Michigan State University, \\\
{\tt haselbyc@msu.edu, karniksa@msu.edu, iwenmark@msu.edu}}
}
\maketitle

\begin{abstract}
We propose an adaptive and provably accurate tensor completion approach based on combining matrix completion techniques (see, e.g., \cite{candes2012exact,krishnamurthy2014power,Ward2015}) for a small number of slices with a modified noise robust version of Jennrich’s algorithm (see, e.g., \cite[Section 3]{moitra2018algorithmic}).  In the simplest case, this leads to a sampling strategy that more densely samples two outer slices (the bread), and then more sparsely samples additional inner slices (the bbq-braised tofu) for the final completion. Under mild assumptions on the factor matrices, the proposed algorithm completes an $n \times n \times n$ tensor with CP-rank $r$ with high probability while using at most $\mathcal{O}(nr\log^2 r)$ adaptively chosen samples.  Empirical experiments further verify that the proposed approach works well in practice, including as a low-rank approximation method in the presence of additive noise.
\end{abstract}

\section{Introduction}

Consider a CP rank-$r$ tensor $\calT \in \R^{n\times n\times n}$ where $r \le n$ with CP-decomposition 
\begin{equation}
\label{equ:ExactLowCPRank}
\calT = \sum_{i=1}^r \vb{a}_i \circ \vb{b}_i \circ \vb{c}_i.
\end{equation}
Here $\vb{a}_i, \vb{b}_i, \vb{c}_i \in \R^n$ for all $i \in [r] := \{1, \dots, r \}$, and $\circ$ denotes the outer product so that $\vb{a}_i \circ \vb{b}_i \circ \vb{c}_i \in \R^{n\times n\times n}$ has entries $\left( \vb{a}_i \circ \vb{b}_i \circ \vb{c}_i \right)_{h,j,k} = (\vb{a}_i)_h (\vb{b}_i)_j (\vb{c}_i)_k$ for all $i \in [r]$.  The tensor completion problem is to reconstruct $\calT$ after observing a subset of its entries $\calT_{i,j,k}$ for $(i,j,k) \in \Omega \subset [n]^3$.  In this paper $\Omega$ will consist of random entries adaptively sampled in several rounds (i.e., so that the entries observed in round $\ell+1$ are selected via a distribution that depends on the entries observed in rounds $1$ to $\ell$).  We  hasten to add, however, that the general proof approach taken herein can itself be adapted to also yield tensor completion results based on non-adaptive sampling at the expense of further restricting the class of tensors considered below for which the methods will be guaranteed to succeed. 

In order to define the class of tensors for which the proposed approach will be guaranteed to succeed we need several definitions.  Define the \emph{factor matrix} $\mA = \begin{bmatrix} \vb{a}_1 & \vb{a}_2 & \cdots & \vb{a}_r \end{bmatrix} \in \R^{n \times r}$ to have columns given by the factor vectors ${\bf a_j}$ in \eqref{equ:ExactLowCPRank}, as well as for $\mB, \mC \in \R^{n \times r}$ with columns arranged in the same corresponding orders. We will employ the \emph{Khatri-Rao product} of two matrices; which is the matrix that results from computing the Kronecker product of their matching columns. That is, for $\mA,\mB \in \R^{n \times r}$, their Khatri-Rao product is the matrix $\mA \odot \mB \in \R^{n^2 \times r}$ defined by
\[
\mA\odot \mB := \begin{bmatrix}
\vb{a}_1 \otimes \vb{b}_1 &\vb{a}_2 \otimes \vb{b}_2 & \dots& \vb{a}_n \otimes \vb{b}_n 
\end{bmatrix}. \] This operation is useful when considering the matricizations or flattenings of a rank-$r$ CP tensor \eqref{equ:ExactLowCPRank}. E.g. the mode-3 unfolding of $\mathcal{T}$ is equal to $\mT_{(3)} = \mC (\mA \odot \mB)^T$, see \cite{kolda2009tensor}. 
Define the \emph{coherence} of an $r$-dimensional subspace $U \subset \R^n$ to be $$\mu(U) := \dfrac{n}{r}\max_{i \in [n]}\|\calP_{U}\ve_i\|_2^2,$$ where $\calP_{U}$ is the orthogonal projection onto $U$, and where $\ve_i \in \R^n$ for $i \in [n]$ denotes the $i^{\rm th}$ standard basis vector.  Also, the \emph{Kruskal rank of a matrix} is the maximum integer $r$ such that any $r$ columns of the matrix are linearly independent.  

Finally, for any positive integers $n, r, s$ with $r,s \le n$ and any $\mu_0 \in [1,n/r]$, we define $\mathbb{T}(n,r,\mu_0,s)$ to be the class of all tensors $\mathcal{T} = \sum_{i=1}^r \vb{a}_i \circ \vb{b}_i \circ \vb{c}_i \in \R^{n \times n \times n}$ for which: 
\begin{enumerate}[(a)]
    \item the factor matrices $\vb{A},\vb{B}$ both have full column rank,
    \label{assump_a}
    \item the column space of $\vb{A}$ has coherence bounded above by $\mu_0$, and
    \label{assump_b}
    \item for some $s \in [n]$, every $s \times r$ submatrix of $\vb{C}$ has Kruskal rank $\ge 2$.
    \label{assump_c}
\end{enumerate}

We are now able to state our main result.

\begin{theorem}
\label{thm:MainResult}
There exists an adaptive random sampling strategy and an associated reconstruction algorithm (see Algorithm~\ref{alg:tensor_sandwich}) such that for any $\delta > 0$, after observing at most $C_1 s\mu_0nr \log^2(r^2/\delta)$ entries of a tensor $\mathcal{T} \in \mathbb{T}(n,r,\mu_0,s)$, the algorithm completes $\mathcal{T}$ with probability at least $1-s\delta$.  Here $C_1 > 0$ is absolute constant that is independent of all other quantities.
\end{theorem}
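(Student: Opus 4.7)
The plan is a three-stage ``sandwich'' following the abstract: (1) recover $s$ designated ``bread'' slices by adaptive matrix completion, (2) use them as input to a Jennrich-style simultaneous diagonalization to recover $\mA$ and $\mB$, and (3) fill in the remaining slices cheaply via $O(r)$ additional entries each. For Stage 1, I designate $s$ slab indices $k_1,\dots,k_s \in [n]$ and recover each $\mathcal{T}_{:,:,k_\ell} = \mA\, \diag(\mC_{k_\ell,:})\, \mB^T$ using an adaptive matrix completion subroutine of Krishnamurthy--Singh / Chen--Ward type. Since $\col(\mathcal{T}_{:,:,k_\ell}) \subseteq \col(\mA)$, assumption~(\ref{assump_b}) bounds the relevant coherence by $\mu_0$; the subroutine then recovers each slice exactly with probability at least $1-\delta$ from $O(\mu_0 n r \log^2(r^2/\delta))$ adaptively chosen entries. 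A union bound over the $s$ slices yields the target failure probability $s\delta$ and the aggregate sample count $O(s\mu_0 n r \log^2(r^2/\delta))$ that appears in the theorem.

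For Stage 2, I form two random linear combinations
\[
M_p \;=\; \sum_{\ell=1}^{s}\alpha^{(p)}_\ell\,\mathcal{T}_{:,:,k_\ell} \;=\; \mA\,\tilde D_p\, \mB^T, \qquad p\in\{1,2\},
\]
where $\tilde D_p = \diag\!\bigl(\sum_\ell \alpha^{(p)}_\ell \mC_{k_\ell,:}\bigr)$. Assumption~(\ref{assump_c}) says the $s\times r$ submatrix $\mC_{\{k_1,\dots,k_s\},:}$ has Kruskal rank $\ge 2$, i.e., any two of its columns are linearly independent, so for every pair $i\neq j$ the equation $\tilde d_{1,i}\tilde d_{2,j} = \tilde d_{1,j}\tilde d_{2,i}$ cuts out a proper algebraic subvariety of $\mathbb{R}^{2s}$ in $(\alpha^{(1)},\alpha^{(2)})$. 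A continuous random draw of the $\alpha$'s therefore makes the $r$ ratios $\tilde d_{1,i}/\tilde d_{2,i}$ pairwise distinct almost surely. Combined with the full-column-rank condition~(\ref{assump_a}), Jennrich's algorithm then recovers $\mA$ and $\mB$ exactly (up to a shared permutation and coordinate scaling) by simultaneous diagonalization of $M_1, M_2$ on $\col(\mA)$. For Stage 3, with $\mA, \mB$ in hand, each remaining slice $\mathcal{T}_{:,:,k}$ is a linear function of the unknown row $\mC_{k,:}\in\mathbb{R}^r$: the observed entry $\mathcal{T}_{i,j,k}=\sum_\ell a_{i,\ell}b_{j,\ell}c_{k,\ell}$ supplies one row of the measurement matrix $\mA\odot\mB$, which has full column rank $r$ under~(\ref{assump_a}); adaptively collecting $r$ linearly independent rows per slice yields $\mC_{k,:}$ at a combined cost $O(nr)$, dominated by Stage 1.

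The main obstacle is Stage 1: one has to invoke an adaptive matrix completion guarantee that delivers the stated $\log^2(r^2/\delta)$ dependence on the failure probability while relying only on column-space coherence of $\mA$, since we have no direct control over the row-space coherence of an individual slice (which depends on $\mB\,\diag(\mC_{k_\ell,:})$ and is sensitive to near-vanishing entries of $\mC_{k_\ell,:}$). Stages 2 and 3 are essentially structural given exact bread slices: Stage 2 reduces to an almost-sure transversality argument enabled by assumption~(\ref{assump_c}) together with uniqueness of Jennrich's eigendecomposition, and Stage 3 is routine adaptive linear inversion under assumption~(\ref{assump_a}).
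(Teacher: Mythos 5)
Your proposal matches the paper's argument essentially step for step: the same $s$-slice adaptive completion via the Krishnamurthy--Singh guarantee (which, as you correctly flag, needs only column-space coherence of $\mA$, with a union bound giving the $1-s\delta$ probability), the same Jennrich-style simultaneous diagonalization of two random linear combinations with assumption~(\ref{assump_c}) ensuring almost surely distinct eigenvalue ratios, and the same recovery of $\mC$ from $r$ linearly independent rows of $\mA\odot\mB$ (the paper realizes your ``adaptively collect $r$ independent rows'' step concretely via QR with column pivoting, yielding $nr$ fiber samples). The proof is correct and takes the same route as the paper.
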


The proof of our sandwich sampling algorithm involves three stages. First, w.l.o.g., we pick $s$ mode-$3$ slices and then use a matrix completion algorithm that with high probability recovers these slices and observes at most $C'_1 \mu_0nr \log^2(r^2/\delta)$ entries in each slice. Second, if this $n \times n \times s$ subtensor is correctly completed, we can then use a deterministic variant of Jennrich's Algorithm\footnote{As discussed in \cite{Kolda_onJennrich} the work of \cite{Leurgans1993} is perhaps a more accurate attribution of the method, however we use the traditional name of Jennrich's Algorithm in this work.} on the completed subtensor to learn the factor matrices $\vb{A}$ and $\vb{B}$. Third, once we know the factor matrices $\vb{A}$ and $\vb{B}$, we can deterministically find $r$ sample locations in each of the $n$ mode-$3$ slices whose values allow a censored least squares problem to solve for the third factor matrix $\vb{C}$. This three-stage procedure uses at most $C'_1 s\mu_0nr \log^2(r^2/\delta)$ samples to complete the $s$ initial mode-$3$ slices in order to learn $\vb{A}$ and $\vb{B}$, and then $nr$ additional samples to learn $\vb{C}$ thereafter, for a total of at most $C'_1 s\mu_0nr \log^2(r^2/\delta) + nr \le C_1 s\mu_0nr \log^2(r^2/\delta)$ samples. See Figure \ref{fig:schematic} for a schematic illustration of the overall sampling strategy where fibers are sampled through the middle of the sandwich for simplicity.

We note that assumptions \ref{assump_a} and \ref{assump_c} are the necessary assumptions for the Jennrich's step to work with any $n \times n \times s$ subtensor. Furthermore, if the columns of the factor matrices are drawn from any continuous distribution, assumption \ref{assump_c} for $s = 2$ holds with probability $1$.  Finally, something akin to assumption \ref{assump_b} is always required in completion problems.

\begin{figure}[h]
\centering
\includegraphics[width=\columnwidth]{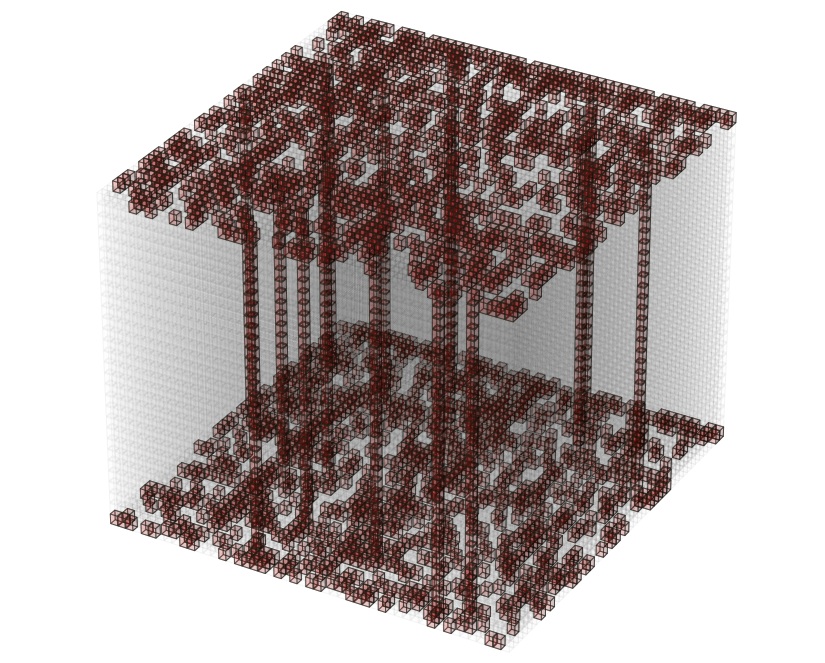}
\caption{A schematic depiction of the sampling strategy where $s = 2$ slices have been sampled relatively densely in order to compute $\vb{A}$ and $\vb{B}$, and where additional fibers where then sampled elsewhere to help compute $\vb{C}$}
\label{fig:schematic}
\end{figure}

\subsection{Related Work}
Many prior works on low-rank tensor completion use non-adaptive and uniform sampling \cite{jain2014provable,barak2016noisy,yuan2016tensor,yuan2017incoherent,potechin2017exact,montanari2018spectral,liu2020tensor,kivva2020exact}. While some of those works \cite{barak2016noisy,kivva2020exact,montanari2018spectral} can handle CP-ranks up to roughly $n^{3/2}$ instead of $n$, all of them require at least $\mathcal{O}(n^{3/2})$ samples, even when the rank is $r = \mathcal{O}(1)$. Furthermore, \cite{barak2016noisy} shows that completing an $n \times n \times n$ rank-$r$ tensor from $n^{3/2-\epsilon}$ uniformly random samples is NP-hard by comparison to the problem of refuting a 3-SAT formula with $n$ variables and $n^{3/2-\eps}$ clauses.  

In \cite{krishnamurthy2013low}, the authors propose a method for completing CP-rank $r \le n$ tensors using adaptive sampling which, for order-$3$ tensors, requires $\mathcal{O}(\mu_0^2 nr^{5/2}\log^2 r)$ samples. Our algorithm requires a number of samples which has a more favorable dependence on the coherence $\mu_0$ and rank $r$. Furthermore, our result only requires coherence assumptions about $\vb{A}$, instead of about $\vb{A}$ \emph{and} $\vb{B}$. These improvements come at the expense of requiring the mild additional assumptions \ref{assump_a} and \ref{assump_c} in our Theorem \ref{thm:MainResult} related to Jennrich's algorithm, however.

The first step in our tensor completion algorithm involves using an adaptive sampling algorithm to complete $s$ mode-$3$ slices of the tensor. Our tensor completion algorithm and results are based on the adaptive matrix completion algorithm and results in \cite{krishnamurthy2014power} which, with high probability, uses $\mathcal{O}(\mu_0 nr\log^2 r)$ samples to complete a rank $r$ matrix.  However, our algorithm can be adapted to use other adaptive matrix completion results such as, e.g., \cite{Ward2015} with relative ease.

In the censored least squares phase of our algorithm, we can sample entire fibers of the tensor as done in Figure~\ref{fig:schematic}.  We note that doing so is similar in spirit to the fiber sampling approach of Sørensen and De Lathauwer \cite{sorensen2019fiber}. However, their work focuses on determining algebraic constraints on the factor matrices of a low rank tensor which, when satisfied, allow the tensor to be completed from the sampled fibers. As such, our results cannot be directly compared with \cite{sorensen2019fiber}.

\section{Proof of Theorem~\ref{thm:MainResult}}

In this section we follow our 3 stage proof outline.

\subsection{Completing $s$ mode-$3$ slices of $\mathcal{T}$}

We start by picking any subset of indices $S \subset [n]$ with $|S| = s$ elements. For each $k \in S$, the mode-$3$ slice $\calT_{:,:,k} \in \R^{n \times n}$ satisfies $$\calT_{:,:,k} = \sum_{i = 1}^{r}\langle\vb{c}_i,\vb{e}_k\rangle\vb{a}_i\vb{b}_i^T,$$ and so, $\col(\calT_{:,:,k}) \subseteq \col(\vb{A})$. By assumption, 
$\col(\vb{A})$ has coherence bounded by $\mu_0$. Thus, the assumptions required by Theorem 1 in \cite{krishnamurthy2014power} hold. Therefore, for each slice $T_{:,:,k}$ $k \in S$, with probability at least $1-\delta$, the adaptive sampling procedure in Algorithm 1 uses at most $C'_1\mu_0 nr\log^2(r^2/\delta)$ samples for some absolute constant $C'_1 > 0$, and completes $\calT_{:,:,k}$.  

By taking a simple union bound over each of the $s$ slices $k \in S$, we have that with probability at least $1-s\delta$, this strategy will successfully complete all of the $s$ slices $\calT_{:,:,k}$ for $k \in S$ and use fewer than $C'_1s\mu_0 nr\log^2(r^2/\delta)$ samples. Let $\Omega_1 = \left\{(i,j,k) \lvert (i,j) \text{ sampled according to \cite{krishnamurthy2014power} for slice } k\in S \right\}$, the set of locations of $\mathcal{T}$ sampled to complete these $s$ slices.

\subsection{Learning mode-$1$ and $2$ factor matrices via Modified Jennrich's Algorithm / simultaneous diagonlization}

Let $\vb{u},\vb{v}$ be random vectors uniformly drawn from the unit sphere $\mathbb{S}^{s-1}$. Denote the sub-vector of ${\vb c}$ with entries indexed in $S$ by $\vb{\tilde{c}}_i = (\vb{c}_i)_S$, and construct two auxiliary matrices $ \vb{T}_{u},  \vb{T}_{v}$ using the completed slices by adding up the linear combinations of the completed slices, weighted by the random vectors $\vb{u},\vb{v}$. In terms of the components, we have:
\begin{align*}
 \vb{T}_{u} &= \sum_{i=1}^r \langle \vb{\tilde{c}}_i, \vb{u} \rangle \vb{a}_i\vb{b}_i^T  \\
   \vb{T}_{v} &= \sum_{i=1}^r \langle \vb{\tilde{c}}_i, \vb{v} \rangle \vb{a}_i\vb{b}_i^T  
\end{align*}   
Denote the $r\times r$ matrix $\vb{D}_{u}$ which has along its diagonal entries the values $\langle \vb{\tilde{c}}_i, \vb{u} \rangle $, and similarly $\vb{D}_v$. Notice the following identity with the product  $\vb{T}_{u} (\vb{T}_{v})^{\dagger}$
\begin{align*}
 \vb{T}_{u} (\vb{T}_{v})^{\dagger} &= \vb{A} \vb{D}_{u} \vb{B}^T \left(\vb{A} \vb{D}_{v} \vb{B}^T\right)^{\dagger}  \\
 &= \vb{A} \vb{D}_{u} \vb{D}_{v}^{-1} \vb{A}^{\dagger}  \\
\end{align*}    
Where the matrix $ \vb{D}_{u} \vb{D}_{v}^{-1} $ is a diagonal matrix with $(i,i)$-entry equal to $\frac{\langle \vb{\tilde{c}}_i, \vb{u} \rangle}{\langle \vb{\tilde{c}}_i, \vb{v} \rangle}$. Clearly the matrix $ \vb{T}_{u} (\vb{T}_{v})^{\dagger}$ is diagonalizable, and so by computing the eigen-decomposition of $\vb{T}_{u} (\vb{T}_{v})^{\dagger}$ we recover the columns of $\vb{A}$ and the eigenvlaues along the diagonal of $ \vb{D}_{u} \vb{D}_{v}^{-1} $. We can order the eigenvectors in descending order by magnitude of their corresponding eigenvalue. Note it is here that we employ assumption \ref{assump_c}: Since $\vb{C}_S$ (i.e., the rows of $\vb{C}$ indexed by $S$) has $k$-rank at least 2, the ratios $\frac{\langle \vb{\tilde{c}}_i, \vb{u} \rangle}{\langle \vb{\tilde{c}}_i, \vb{v} \rangle}$ for each $i$ will almost surely be distinct from one another, and thus the ordering of the eigenvalues is unique. 

Let $\vb{P}$ be the permutation matrix that interchanges the columns of $\vb{A}$ so that instead of being in $\vb{D}_{u} \vb{D}_{v}^{-1} $ order, they are in $\vb{D}_u$ order (ordered greatest to least in terms of the magnitude of $\langle \vb{\tilde{c}}_i, \vb{u} \rangle$). 
Now notice that 
\begin{equation}
\label{eqn:findB}
\begin{split}
 \vb{A}^{\dagger} \vb{T}_{u}  &= \vb{A}^{\dagger} \vb{A} \vb{P} \vb{D}_{u}\vb{B}^T \\
 &= \vb{P} \vb{D}_{u} \vb{B}^T  \\
  &=  (\vb{B}\vb{D}_u \vb{P})^T.  \\
\end{split}    
\end{equation}
This means that the rescaled columns of $\vb{B}$ that are in $\vb{D}_u$ order are now in  $\vb{D}_{u} \vb{D}_{v}^{-1} $ order after we apply the inverse $\vb{A}^{\dagger}$ to $ \vb{T}_{u} $. That is, we have found the matching components of $\vb{A}$ and $\vb{B}$ up to a re-scaling of their outer product!  This was achieved by learning the columns of $\vb{A}$ from $\vb{T}_{u} (\vb{T}_{v})^{\dagger}$ and then, crucially, the matching columns of $\vb{B}$ from $ \vb{A}^{\dagger} \vb{T}_{u} $. The scaling due to $\vb{D}_u$ will be resolved in the final step of the algorithm, where we solve for the missing third factor (i.e., $\vb{B},\vb{C}$ and $\vb{BD},\vb{C}(\vb{D}^{-1})$ are both valid pairs of factors for any diagonal matrix $\vb{D}$). 

\subsection{Learning the mode-$3$ factor matrix}
After obtaining $\vb{A}$ and a rescaled $\vb{B}$ in order to find the remaining components, $\vb{C}$, we will need $\Omega_2$, a second set of revealed locations of the entries of $\mathcal{T}$.  We will also need the solutions to $n$ instances of the following censored least squares problem related to those revealed values

\begin{equation}
   \label{eqn:censored_lstsq}
(\vb{A} \odot \vb{B})_{K_k} \vb{c} = \vb{t}_{K_k} 
\end{equation}
where $k\in[n]$,$\vb{c}=(\vb{C}_{k,:})^T$, $\vb{t}=\text{vec}(\mathcal{T}_{:,:,k})^T$ and $K_k = \left\{ i+n(j-1)\vert (i,j,k)\in\Omega_2 \right\}$. In \eqref{eqn:censored_lstsq}, the term $ \vb{t}_{K_k} $ denotes the vector of length $\lvert K_k \rvert$ which includes only the entries of $\vb{t}$ which have indicies appearing in the set $K_k$. Similarly $(\vb{A} \odot \vb{B})_{K_k}$ is the matrix where we restrict rows of $(\vb{A} \odot \vb{B})$ to only those which have indices appearing in the set.

Note $(\vb{A} \odot \vb{B})$ is full rank because $\vb{A}$ and $\vb{B}$ are assumed to be full rank (see, e.g., \cite{Berge2000}).  This implies the uncensored system is consistent with a unique solution. The difficulty in the censored case is that unobserved values in a particular column of the right hand side of \eqref{eqn:censored_lstsq} could force the discarding of rows of the matrix $(\vb{A} \odot \vb{B})$ that cause the system to become under-determined. 

That is, we must ensure that $(\vb{A} \odot \vb{B})_{K_k}$ has full column rank for each $k$. If we do not vary our sampling procedure from frontal slice to frontal slice, then this corresponds to sampling tubes of the original tensor of the form $\mathcal{T}_{i,j,:}$ and $K_k = K$ for all $k\in[n]$. In order to arrange the sampling to accomplish this, consider a $\vb{QR}$ with column-pivoting factorization of $(\vb{A}\odot \vb{B})^T$, see algorithm 5.4.1 in \cite{golubvanloan1996}). This produces factors and a permutation matrix $\vb{\Pi} \in \mathbb{R}^{n^2 \times n^2}$ such that $(\vb{A}\odot \vb{B})^T \vb{\Pi} = \vb{QR}$, where
\[
\vb{R} = \begin{bmatrix}
\vb{R}_1 & \vb{R}_2    
\end{bmatrix}
\]
and $\vb{R}_1 \in \mathbb{R}^{r\times r}$ is upper-triangular and non-singular. But this means that the first $r$ columns of $\vb{\Pi}$ select a set of columns of $(\vb{A}\odot \vb{B})^T$ which are linearly independent, so for each $\vb{\Pi}_{:,p}$, $p\in[r]$ we have that there exists some $q\in[n^2]$ such that $\vb{e}_{q} = \vb{\Pi}_{:,p}$, the corresponding column of the identity in $\mathbb{R}^{n^2\times n^2}$. Let $i = q \mod n, j = \lfloor \frac{q}{n} \rfloor$. Define then $\Omega_2$ all the tuples of the form $(i,j,:)$. That is, we can read off from $\vb{\Pi}$ which $r$ fibers of length $n$ to sample in order to ensure \eqref{eqn:censored_lstsq} is consistent for each $k\in[n]$. We have specified at most $nr$ new sample locations, and thus $\Omega = \Omega_1 \cup \Omega_2$, the set of all samples employed to complete the tensor $\mathcal{T}$ is at most $\lvert \Omega_1 \rvert + \lvert \Omega_2 \rvert \le C'_1s\mu_0 nr\log^2(r^2/\delta) + nr \le C_1s\mu_0 nr\log^2(r^2/\delta)$.

\begin{remark}
Note, $\vb{\Pi}$ is not unique, and indeed we could use different selections corresponding to other valid permutations from frontal slice to frontal slice to vary the sampling pattern when finding $\vb{C}$. Additionally, including more rows of $(\vb{A} \odot \vb{B})$ beyond the first $r$ as specified by $\vb{\Pi}$ may be numerically advantageous when computing $\vb{C}$.
\end{remark}

\RestyleAlgo{ruled}

\SetKwComment{Comment}{/* }{ */}

\begin{algorithm}[hbt!]
\SetKwComment{Comment}{\# }{}
\SetKwInOut{Input}{input}\SetKwInOut{Output}{output}%
\caption{Consistency Preserving Fiber Sampler}	\label{alg:fiber_sample}
 \Input
  {\par $\gamma \geq 1$, over-sample parameter
  \par $r$ rank parameter
   \par $\vb{A},\vb{B} \in \R^{n \times r}$, estimates for factor matrices
 }
\Output{ $\Omega_2$, set of sample locations}
Compute $\vb{QR}$ with column-pivoting, $\vb{QR} = \left(\vb{A} \odot \vb{B}\right)^T \vb{\Pi}$

\For{$p \in [\gamma r]$}{
$q \gets \text{nonzero}(\vb{\Pi}_{:,p}) $

$i \gets q \mod n$

$j \gets \lfloor \frac{q}{n} \rfloor$

Include $(i,j,k)$ in $\Omega_2$ for all $k\in[n]$
}
\end{algorithm}

\RestyleAlgo{ruled}

\SetKwComment{Comment}{/* }{ */}

\begin{algorithm}[hbt!]{
\SetKwComment{Comment}{\# }{}
\SetKwInOut{Input}{input}\SetKwInOut{Output}{output}%
\caption{Tensor Sandwich}	\label{alg:tensor_sandwich}
 \Input
  {$S\subseteq [n]$ slices to complete }
\Output{$\hat{T}$, completed tensor }
\Comment{Slice Complete Phase}
\For{$k\in S$}{
Use, e.g., \cite{Ward2015}, or algorithm 1 in \cite{krishnamurthy2014power} to complete $\calT_{:,:,k}$. Algorithm 1 in \cite{krishnamurthy2014power} uses at most $C'_1\mu_0 nr\log^2(r^2/\delta)$ adaptively chosen samples.
}
\Comment{Jennrich Complete Phase}

Generate random vectors $\vb{u},\vb{v}\in \mathbb{S}^{s-1}$;

$\vb{T}_{u} \gets \sum_{k\in S} u_{k} \vb{T}_{::k}$

$\vb{T}_{v} \gets \sum_{k\in S} v_{k} \vb{T}_{::k}$

Compute eigen-decomposition $\vb{A}\vb{\Lambda} \vb{A}^{-1} = \vb{T}_u (\vb{T}_u)^{\dagger}$

$\vb{B} \gets \vb{A}^{-1} \vb{T}_{u}$

\Comment{Censored Least Squares Phase}

$\Omega_2 \gets \text{fiber sampler} (\vb{A},\vb{B},r,\gamma$) 

$\vb{C}^T \gets \text{censored least squares }( (\vb{A}\odot  \vb{B}), \mathcal{T}_{i,j,k} \text{ s.t. } (i,j,k)\in \Omega_2)$

$\mathcal{\hat{T}} \gets [\![ \vb{A},\vb{B},\vb{C}]\!]$
}

\end{algorithm}

\section{Experiments} In this section, we show that tensors sampled according to our overall strategy and completed using Algorithm \eqref{alg:tensor_sandwich} support our theoretical findings. In particular we demonstrate that once sample complexity bounds are satisfied, we can achieve very precise levels of relative error by sampling what is overall a small percentage of the total tensor's entries. For example, in the rank 20 case in Figure \ref{fig:sample_study}, the median relative error is $0.000168$ having sampled $\frac{94445}{8000000}  \approx 1.1\%$ of the total entries. Our experiments also verify the linear dependence on rank in the sample complexity bound. Furthermore, in our second set of experiments we show that the method performs useful completion even in the presence of noise. The code and results from the numerical experiments is available at \url{https://github.com/cahaselby/TensorSandwich}.

In all our experiments, the three modes of the tensor have length $n=200$. The data is generated by drawing factor matrices $\vb{A},\vb{B},\vb{C} \in \mathbb{R}^{n\times r}$ with i.i.d standard Gaussian entries and then normalizing the columns. We then weight the components using quadratically decaying weights, i.e. \begin{equation} \label{eqn:data_gen}
    \calT = \sum_{i=1}^r \left(\frac{1}{i^2}\right) \vb{a}_i \circ \vb{b}_i \circ \vb{c}_i.
\end{equation}
Errors are averaged over ten independent trials.  

For each of our trials, $s=2$. After uniformly selecting two frontal slices to complete, within these slices we sample per Algorithm 1 in \cite{Ward2015} using a sample budget of $m$ samples per slice. Slices are then completed using the SDP formulation of the nuclear norm minimization solved via Douglas-Rachford splitting, see \cite{ocpb:16}. Convergence is declared once the primal residual, dual residual and duality gap are below $10^{-8}$ for each of the $s$ selected slices, or after 2500 iterations, whichever comes first. We note that the accuracy of this matrix completion step influences numerically what is achievable in terms of overall accuracy for the completed tensor, and that the error for completing these slices is compounded in the subsequent steps (even in the absence of noise). This explains the apparent ``leveling off'' of the relative error at about $10^{-4}$ even as sample complexity or signal-to-noise ratios increase. 

Following matrix completion on these frontal slices and estimation of $\vb{A}$ and $\vb{B}$, we select $\gamma r$ rows of $(\vb{A}\odot\vb{B})$ using Algorithm \ref{alg:fiber_sample} and the corresponding fibers of $\mathcal{T}$ to solve the censored least square problem. In all the experiments in this section, $\gamma=4$. This results in at most $\gamma n r$ new entries revealed in the original tensor. Total number of entries sampled then is always bounded by $sm + \gamma nr$ (some overlap of samples is possible in the two slices already completed). Note that for all experiments $0.035 nr \log^2(n) < m < 0.7 nr \log^2(n)$.

In Figure \ref{fig:sample_study}, we see a phase transition of our method. For a given rank, prior to a threshold, the error is dominated by the inability to accurately complete the $s$ slices. Once sufficient samples are obtained within these slices, completion reliably succeeds and accuracy approaches the limiting numerical accuracy inherited from the initial slice completion step.  

\begin{figure}[h]
\centering
\includegraphics[width=\columnwidth]{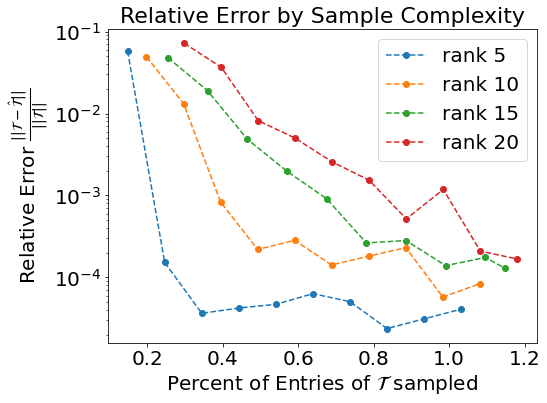}
\caption{Median relative error (log-scaled) of completed tensors of varying rank as sample complexity increases without noise. Each value is the median of ten trials.}
\label{fig:sample_study}
\end{figure}
In Figure \ref{fig:SNR_study}, we add mean-zero i.i.d. Gaussian noise to each entry in our tensor and perform the same completion strategy on the noisy tensor as described earlier in this section. For each trial, the noise tensor $\mathcal{N}$ is scaled to the appropriate signal-to-noise ratio, i.e. $\text{SNR} = 10\log_{10}\tfrac{\|\calT\|}{\|\mathcal{N}\|}$. Weights for each of the components are again set as per  \eqref{eqn:data_gen}. The sample complexity proportions are fixed with respect to slice completion versus fibers sampled to estimate $\vb{C}$ but do scale according to rank in order to facilitate comparison. In all cases the total number of sampled entries is between $0.27\%$ and $1.10\%$ of the total number of entries, depending on rank. 
\begin{figure}[h]
\centering
\includegraphics[width=\columnwidth]{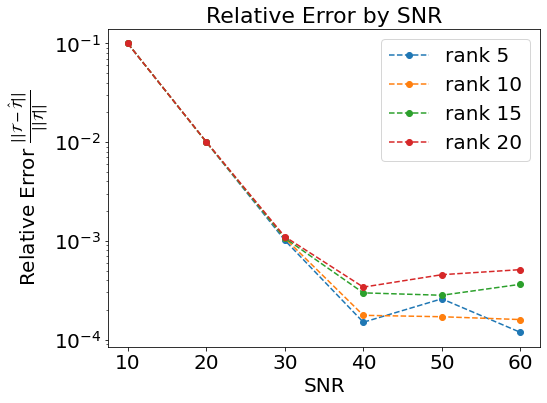}
\caption{Median relative error (log-scaled) of completed tensors of varying rank for different levels of white Gaussian noise. Sample complexity is less than 1.1\% for all trials, scaled linearly by rank.}
\label{fig:SNR_study}
\end{figure}

We noticed during these experiments that using masked alternating least squares alone (as in \cite{TOMASI2005163}) to complete our synthetic tensors (using either the Tensor Sandwich sample pattern, or an equivalent proportion of uniformly drawn samples) achieved at best relative errors of about 6\%. However, the estimate of the tensor as output by Algorithm \ref{alg:tensor_sandwich}, and its sample pattern can be used as an initialization to an iterative scheme like alternating least squares to further improve the accuracy of the completed tensor. In Figure \ref{fig:ts_als_compare}, we show three sets of related experiments:  the relative error at various ranks achieved by alternating least squares alone (AS), Tensor Sandwich alone (TS), or alternating least squares initialized by Tensor Sandwich (TS + ALS). In each trial, 100 iterations of alternating least squares are used, weights for components are set to decay according to \eqref{eqn:data_gen}, and each of the three variations for completing the tensor are used on the same data. We notice that at this level of sample complexity, for either a sample mask chosen uniformly at random or according to the adaptive scheme described in this work, ALS alone is limited. Combined with Tensor Sandwich, however, we can see roughly an order of magnitude improvement in relative error by performing a hundred iterations of ALS on the TS estimate. This shows Tensor Sandwich can be useful as an initialization strategy for other completion methods, either to save on run time by decreasing the total number of iterations needed, or to improve the accuracy of the final estimate.
\begin{figure}[h]
\centering
\includegraphics[width=\columnwidth]{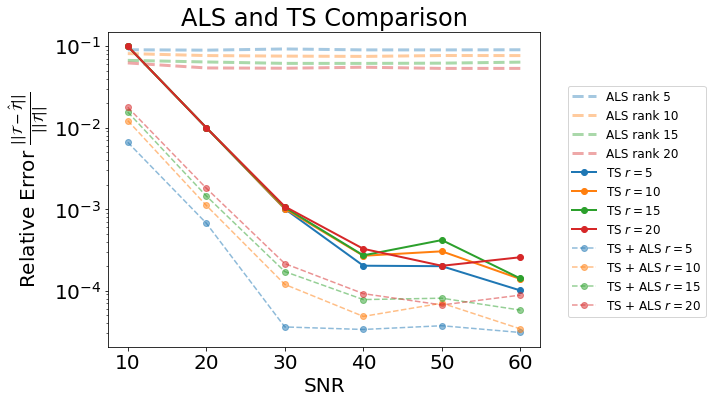}
\caption{Median relative error (log-scaled) of completed tensors of varying rank for different levels of white Gaussian noise. Sample complexity is less than 1.1\% for all trials, scaled linearly by rank. Tensors are completed using 100 iterations of alternating least squares (ALS), or Tensor Sandwich (TS), or 100 iterations of ALS initialized by Tensor Sandwich (TS+ALS).}
\label{fig:ts_als_compare}
\end{figure}

In a final set of experiments, we compare one of the existing (non-adaptive) completion methods discussed in the Related Work section to Tensor Sandwich. We use the same overall sample budget for Tensor Sandwich but instead use the Algorithm in \cite{moitra2018algorithmic} which we refer to as Tensor Complete (TC). The range of total values sampled is the same as in Figure \ref{fig:sample_study}. Implementation details make direct comparisons difficult.  However, the empirical findings summarized in Figure \ref{fig:ts_tc_compare} suggest that Tensor Sandwich can produce better estimates for high rank tensors than Tensor Complete can when sample budgets are limited. 
\begin{figure}[h]
\centering
\includegraphics[width=\columnwidth]{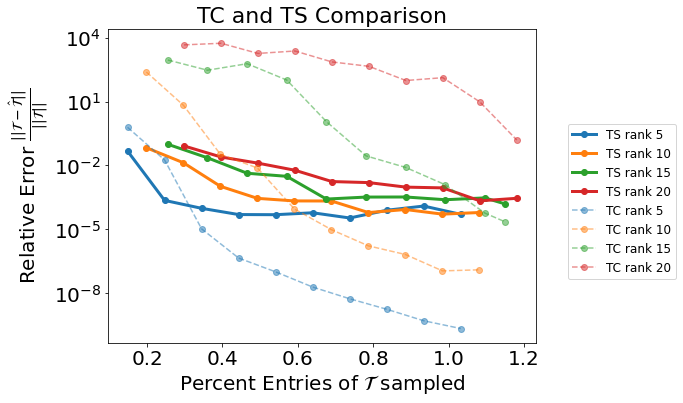}
\caption{Median relative error (log-scaled) of completed tensors of varying rank for different total number of entries revealed. Tensors are completed using  Tensor Sandwich (TS), or Tensor Complete as described in \cite{moitra2018algorithmic} without noise.}
\label{fig:ts_tc_compare}
\end{figure}

\section{Conclusions and Future Work}
We have presented an adaptive sampling approach for low CP-rank tensor completion which completes a CP-rank $r$ tensor of size $n \times n \times n$ using $\mathcal{O}(nr\log^2 r)$ samples with high probability. Our method significantly improves on the tensor completion result in \cite{krishnamurthy2013low} while only making a mild additional assumption on the third factor matrix. We also provided numerical experiments to demonstrate that a version of our tensor completion method is robust to noise empirically.

Several questions remain for future work. First, it would be interesting to prove rigorous guarantees for a non-adaptive version of our tensor completion algorithm using as few additional assumptions as possible.  Second, it is possible to extend our tensor completion method to higher-order tensors in several ways.  Choosing the best approach based on sampling complexity analysis would be of value.  Finally, it is always important to provide rigorous completion guarantees for the case where the observed entries are corrupted with some sort of noise.  Very little prior work on low-rank tensor completion provides any such theoretical guarantees making a treatment of this case an high-value goal.

\section*{Acknowledgements}
This work was supported in part by NSF DMS 2106472.

\bibliographystyle{IEEEtran}
\bibliography{ref}
\end{document}